\newtheorem{thm}{Theorem}
\newtheorem{lem}[thm]{Lemma}
\newtheorem{prop}[thm]{Proposition}
\theoremstyle{definition}
\title{A strategy for Isolator in the Toucher-Isolator game on trees}
\author{Sopon Boriboon\thanks{\,Department of Mathematics and Computer Science, Faculty of Science, Chulalongkorn University, Bangkok 10330, Thailand; \texttt{soponboriboon@gmail.com}.}
  \and Teeradej Kittipassorn\thanks{\,Department of Mathematics and Computer Science, Faculty of Science, Chulalongkorn University, Bangkok 10330, Thailand; \texttt{teeradej.k@chula.ac.th}.}}
\begin{document}
	
	\maketitle
	
	\begin{abstract}
	In the Toucher-Isolator game, introduced recently by Dowden, Kang, Mikala\v{c}ki and Stojakovi\'{c}, Toucher and Isolator alternately claim an edge from a graph such that Toucher aims to touch as many vertices as possible, while Isolator aims to isolate as many vertices as possible, where Toucher plays first. Among trees with $n$ vertices, they showed that the star is the best choice for Isolator and they asked for the most suitable tree for Toucher. Later, R\"{a}ty showed that the answer is the path with $n$ vertices. We give a simple alternative proof of this result. The method to determine where Isolator should play is by breaking down the gains and losses in each move of both players. 
	\end{abstract}
	
	\section{Introduction}\label{section-introduction}
	
	A Maker-Breaker game, introduced by Erd\H{o}s and Selfridge~\cite{erdos1973combinatorial} in 1973, is a positional game played on the complete graph $K_n$ with $n$ vertices, by two players: Maker and Breaker, who alternately claim an edge from the (remaining) graph, where Maker plays first. Maker wins if she can build a particular structure (e.g., a clique~\cite{MR2788689,MR2854626},  a perfect matching~\cite{MR2467817,MR3870440} or a Hamiltonian cycle ~\cite{MR2467817,MR2726601}) from her claimed edges, while Breaker wins if he can prevent this. There are several variants of Maker-Breaker games, many of which are studied recently (see~\cite{espig2015walker,forcan2019walkermaker,gledel2019maker,MR3963857}).
		
	The \textit{Toucher-Isolator} game, introduced by Dowden, Kang, Mikala\v{c}ki and Stojakovi\'{c}~\cite{MR4025410} in 2019, is a quantitative version of a Maker-Breaker game played on a finite graph by two players: \textit{Toucher} and \textit{Isolator}, who alternately claim an edge from the (remaining) graph, where Toucher plays first. A vertex is \emph{touched} if it is incident to at least one edge claimed by Toucher, and a vertex is \emph{untouched} if all edges incident to it are claimed by Isolator. The \textit{score} of the game is the number of untouched vertices at the end of the game when all edges  have  been claimed. Toucher aims at minimizing the score, while Isolator aims at maximizing the score. For a graph $G$, let $u(G)$ be the score of the game on $G$ when both players play optimally.  
	
	The above mentioned authors gave general upper and lower bounds for $u(G)$, leaving the asymptotic behavior of $u(C_n)$ and $u(P_n)$ as the most interesting unsolved cases, where $C_n$ is a cycle with $n$ vertices  and $P_n$ is a path with $n$ vertices. Later in 2019, R\"{a}ty~\cite{raty2019achievement} determined the exact values of $u(C_n)$ and $u(P_n)$, showing that
	\begin{center}
		$u(C_n)=\left\lfloor\dfrac{n+1}{5}\right\rfloor$ \quad and \quad $u(P_n)=\left\lfloor\dfrac{n+3}{5}\right\rfloor.$
	\end{center}
	Moreover, the first set of authors showed that for any tree $T$ with $n\geq3$ vertices, 
	\begin{center}
		$\dfrac{n+2}{8}\leq u(T)\leq\dfrac{n-1}{2}$,
	\end{center}
	where the upper bound is tight when $T$ is a star, but the only tight example they found for the lower bound is a path with six vertices. Therefore, they asked whether there is an infinite family of tight examples for lower bound, or it can be improved for large $n$. 
	
	Later in 2020, R\"{a}ty~\cite{raty2020toucher} improved the lower bound for $u(T)$ by showing that the path $P_n$ is the most suitable tree with $n$ vertices for Toucher.
	
	\begin{thm}\label{thm:uT}
		Let $T$ be a tree with $n\geq3$ vertices. Then 
		\begin{center}
			$u(T)\geq\left\lfloor\dfrac{n+3}{5}\right\rfloor$.
		\end{center}
	\end{thm}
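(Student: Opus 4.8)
The plan is to track a weighted potential measuring how close Isolator is to securing each vertex. Keep the term \emph{untouched} as in the statement, and for an untouched vertex $v$ let $f(v)$ be the number of its incident edges that are still unclaimed; a touched vertex leaves play. Since an untouched vertex with $f(v)=0$ has all of its edges claimed by Isolator, it is permanently isolated and should be worth a full point, so I would assign each untouched vertex the weight $w_{f(v)}$ with
\[
w_0=1,\qquad w_1=\tfrac12,\qquad w_2=\tfrac15,\qquad w_f=0\ \ (f\ge 3),
\]
and set $\Phi=\sum_{v\ \mathrm{untouched}}w_{f(v)}$. At the end of the game every surviving untouched vertex is isolated, so the final value of $\Phi$ equals the score. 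Using $\sum_v\deg(v)=2(n-1)$ and the leaf identity $n_1=2+\sum_{d\ge3}(d-2)\,n_d$ (where $n_d$ counts vertices of degree $d$), the initial value works out to
\[
\Phi_0=\sum_{v}w_{\deg(v)}=\frac{n+3}{5}+\sum_{v:\,\deg(v)\ge3}\frac{3\deg(v)-8}{10}\ \ge\ \frac{n+3}{5},
\]
with equality exactly when $T$ has no vertex of degree $\ge3$, i.e.\ when $T$ is a path. This already singles out the path as the extremal tree and reduces the theorem to showing Isolator can keep the total drop of $\Phi$ within the slack $\Phi_0-\lfloor(n+3)/5\rfloor$.

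Next I would make Isolator's strategy reactive and carry out the gains-and-losses bookkeeping move by move. When Toucher claims a free edge $uv$, each untouched endpoint dies, costing $w_{f(u)}+w_{f(v)}\le 2w_1=1$; when Isolator claims a free edge $xy$, each untouched endpoint advances one step toward isolation, a single step being worth $w_0-w_1=\tfrac12$, $w_1-w_2=\tfrac3{10}$, or $w_2-w_3=\tfrac15$. The dangerous configuration is a \emph{golden edge}: an unclaimed edge both of whose endpoints are untouched with $f=1$ (equivalently, an isolated edge of the forest of unclaimed edges with two untouched ends). Toucher claiming a golden edge costs a full $1$, and Isolator can match this only by claiming another golden edge. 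I would therefore have Isolator play greedily by priority: first claim a golden edge if one exists, otherwise complete an isolation ($f=1\to0$), otherwise advance the most valuable available endpoint, breaking ties so as not to hand Toucher a fresh golden edge.

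With this strategy the core claim is $\Phi_{\mathrm{final}}\ge\lfloor(n+3)/5\rfloor$. I would pair each Isolator move with the Toucher move immediately before it and argue that the cumulative drop of $\Phi$ stays inside the slack. Rounds in which Toucher touches at most one valuable vertex are answered by Isolator completing or advancing a vertex, giving net change $\ge 0$. The delicate point is the dynamics of golden edges: a single move can create one or two of them, Toucher profits by sniping one, and Isolator must never be left facing an uncompensated snipe. I would control this by having Isolator consume golden edges on sight and by tracking the local structure of the unclaimed-edge forest, so that the number of golden-edge snipes Toucher forces, minus those Isolator answers in kind, is bounded by the slack. A final ingredient is parity: when $n$ is even Toucher has one unpaired last move, and one must show that under Isolator's play this last unclaimed edge can be forced to join two already-touched vertices (cost $0$), the deficit otherwise fitting inside $\Phi_0-\lfloor(n+3)/5\rfloor$.

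I expect the endgame accounting to be the main obstacle, and it is genuinely tight: for paths with $n\equiv2\pmod5$ the slack $\Phi_0-\lfloor(n+3)/5\rfloor$ is exactly $0$, so Isolator must guarantee a net drop of $0$ with no room to spare. The heart of the proof is thus a careful case analysis showing that Toucher can never force an uncompensated golden-edge snipe nor profit from the parity tempo, which I anticipate will require tracking the unclaimed-edge forest near the vertices in play together with a direct check of the finitely many small configurations anchoring the argument. The monotone identity for $\Phi_0$ then guarantees that every non-path tree contributes strictly positive extra slack, so once the path case is settled the bound $u(T)\ge\lfloor(n+3)/5\rfloor$ follows for all trees.
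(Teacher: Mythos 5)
Your setup is sound as far as it goes: the weights $w_0=1$, $w_1=\tfrac12$, $w_2=\tfrac15$, $w_f=0$ for $f\ge3$ do give $\Phi_{\mathrm{final}}=\text{score}$, and your identity $\Phi_0=\frac{n+3}{5}+\sum_{\deg(v)\ge3}\frac{3\deg(v)-8}{10}$ checks out, with equality exactly for paths. This is the potential-function line of R\"{a}ty's 2019 path/cycle paper, not the route the paper under review takes (the paper instead deletes Toucher edges and Isolator paths from the graph, tracks the single quantity $m+4k-3l$ through explicit profit tables, and inducts on a modified ``non-leaf Isolator--Toucher'' game on forests). But what you have written is a plan, not a proof, and the missing part is precisely the entire difficulty.

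Concretely: the core claim, that your greedy priority strategy keeps the cumulative drop of $\Phi$ within the slack, is asserted rather than proved, and your per-round accounting as stated is false. Toucher claiming an edge whose untouched endpoints have $f=1$ and $f=2$ costs $\tfrac12+\tfrac15=\tfrac{7}{10}$, while Isolator's best reply, absent a golden edge, gains at most $w_0-w_1=\tfrac12$, so the round nets $-\tfrac15$; on a path with $n\equiv2\pmod5$ your slack is exactly $0$, so even one such uncompensated round destroys the bound, and everything hinges on an amortized analysis that you explicitly defer (``I would control this by\dots'', ``I anticipate will require\dots''). Note also that Toucher's moves never create golden edges --- a golden edge needs both endpoints untouched with $f=1$, and a Toucher claim only touches vertices without lowering $f$ elsewhere --- so it is Isolator's own advancing moves that spawn the snipe targets; your priority list must be shown never to spawn more targets than it can defend, which is the uncontrolled step. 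Finally, the closing inference ``once the path case is settled, positive slack handles all trees'' is a non sequitur: branching gives Toucher new options as well as giving you slack of $\frac{3d-8}{10}$ per big vertex, and nothing in the proposal shows the extra losses Toucher can engineer near big vertices are covered by that slack (this is why the tree case required a separate argument from the path case in the literature, and why the paper's proof works on forests with leaves discounted rather than by perturbing the path analysis). As it stands, the proposal establishes only the initial-potential inequality; the strategy analysis that constitutes the theorem is absent.
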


	In this paper, we give a simple new proof of this theorem. The argument proceeds as follows. The strategy for Isolator is that he claims an edge which immediately creates an untouched vertex in every move for as long as he can (see Figure~\ref{figure:thm}: left). When no such an edge exists, we modify the graph before the game continues. The edges claimed by Isolator can be deleted as their disappearance does not change the touched/untouched status of any vertex (see Figure~\ref{figure:thm}: middle). Observe that the leaves of the remaining tree are touched otherwise Isolator would have claimed the edge incident to it. Then we delete the edges $e$ claimed by Toucher one by one and, in order to keep the game equivalent to the original game, we replace the edges $u_1v,\dots,u_tv$ sharing a vertex $v$ with $e$ by new edges $u_1v_1,\dots,u_tv_t$ keeping their respective Toucher/Isolator status, where the new vertices  $v_1,\dots,v_t$ are considered touched. The resulting graph is a forest all of whose leaves are considered touch (see Figure~\ref{figure:thm}: right).
		
			Therefore, this motivates us to study the \emph{non-leaf Isolator-Toucher game} on a forest $F$ which is a variant of the Toucher-Isolator game on $F$ where Isolator plays first and the score of the game is the number of untouched vertices, which are not leaves of $F$, at the end of the game. The aim of Toucher is to minimize the score, while the aim of Isolator is to maximize the score. We remark that this game is inspired by the proof of the lower bound for $P_n$ in~\cite{raty2019achievement}.  Our main lemma gives a lower bound for the minimum score $\alpha(m,k,l)$ of the non-leaf Isolator-Toucher game on $F$ when both players play optimally, among all forests $F$ with $m$ edges, $k$ components, and $l$ leaves.
	
	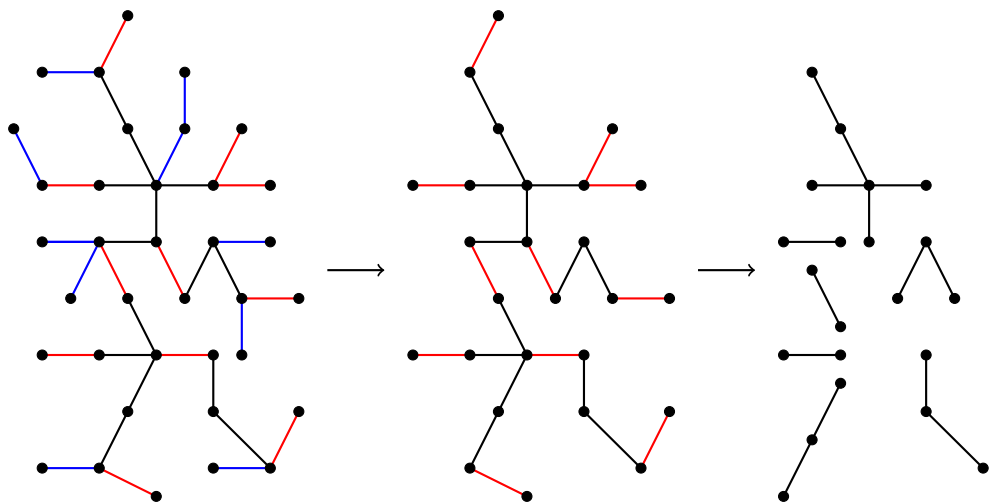
\begin{figure}[H] \centering
		\begin{tikzpicture}[baseline=2ex,scale=0.75]
		\draw[thick,blue] (0.5,7) -- (1,6);
		\draw[thick,blue] (1,1) -- (2,1);
		\draw[thick,blue] (1,5) -- (2,5) -- (1.5,4);
		\draw[thick,blue] (1,8) -- (2,8);
		\draw[thick,blue] (4,1) -- (5,1);
		\draw[thick,blue] (3,6) -- (3.5,7) -- (3.5,8);	
		\draw[thick,blue] (1,5) -- (2,5);
		\draw[thick,blue] (4.5,3) -- (4.5,4);
		\draw[thick,blue] (4,5) -- (5,5);
		\draw[thick,red] (1,3) -- (2,3);
		\draw[thick,red] (1,6) -- (2,6);
		\draw[thick,red] (2,1) -- (3,0.5);
		\draw[thick,red] (2,5) -- (2.5,4);
		\draw[thick,red] (2,8) -- (2.5,9);
		\draw[thick,red] (3,3) -- (4,3);
		\draw[thick,red] (3,5) -- (3.5,4);
		\draw[thick,red] (5,1) -- (5.5,2);
		\draw[thick,red] (4.5,4) -- (5.5,4);
		\draw[thick,red] (4.5,7) -- (4,6) -- (5,6);  
		\draw[thick] (2,1) -- (2.5,2) -- (3,3) -- (2,3);
		\draw[thick] (3,3) -- (2.5,4);
		\draw[thick] (2,5) -- (3,5) -- (3,6) -- (2,6) ;
		\draw[thick] (2,8) -- (2.5,7) -- (3,6) -- (4,6) ;
		\draw[thick] (3.5,4) -- (4,5) -- (4.5,4);	
		\draw[thick] (4,3) -- (4,2) -- (5,1);
		\draw[thick,->] (6,4.5) -- (7,4.5);
		\draw[fill=black] (0.5,7) circle (2.5pt);
		\draw[fill=black] (1,1) circle (2.5pt);
		\draw[fill=black] (1,3) circle (2.5pt);
		\draw[fill=black] (1,5) circle (2.5pt);
		\draw[fill=black] (1,6) circle (2.5pt);
		\draw[fill=black] (1,8) circle (2.5pt);
		\draw[fill=black] (1.5,4) circle (2.5pt);
		\draw[fill=black] (2,1) circle (2.5pt);
		\draw[fill=black] (2,3) circle (2.5pt);
		\draw[fill=black] (2,5) circle (2.5pt);
		\draw[fill=black] (2,6) circle (2.5pt);
		\draw[fill=black] (2,8) circle (2.5pt);
		\draw[fill=black] (2.5,2) circle (2.5pt);
		\draw[fill=black] (2.5,4) circle (2.5pt);
		\draw[fill=black] (2.5,7) circle (2.5pt);
		\draw[fill=black] (2.5,9) circle (2.5pt);
		\draw[fill=black] (3,0.5) circle (2.5pt);
		\draw[fill=black] (3,3) circle (2.5pt);
		\draw[fill=black] (3,5) circle (2.5pt);
		\draw[fill=black] (3,6) circle (2.5pt);
		\draw[fill=black] (3.5,4) circle (2.5pt);
		\draw[fill=black] (3.5,7) circle (2.5pt);
		\draw[fill=black] (3.5,8) circle (2.5pt);
		\draw[fill=black] (4,1) circle (2.5pt);
		\draw[fill=black] (4,2) circle (2.5pt);
		\draw[fill=black] (4,3) circle (2.5pt);
		\draw[fill=black] (4,5) circle (2.5pt);
		\draw[fill=black] (4,6) circle (2.5pt);
		\draw[fill=black] (4.5,3) circle (2.5pt);
		\draw[fill=black] (4.5,4) circle (2.5pt);
		\draw[fill=black] (4.5,7) circle (2.5pt);
		\draw[fill=black] (5,1) circle (2.5pt);
		\draw[fill=black] (5,5) circle (2.5pt);
		\draw[fill=black] (5,6) circle (2.5pt);
		\draw[fill=black] (5.5,2) circle (2.5pt);
		\draw[fill=black] (5.5,4) circle (2.5pt);
		\draw[thick,red] (7.5,3) -- (8.5,3);
		\draw[thick,red] (7.5,6) -- (8.5,6);
		\draw[thick,red] (8.5,1) -- (9.5,0.5);
		\draw[thick,red] (8.5,5) -- (9,4);
		\draw[thick,red] (8.5,8) -- (9,9);
		\draw[thick,red] (9.5,3) -- (10.5,3);
		\draw[thick,red] (9.5,5) -- (10,4);
		\draw[thick,red] (11.5,1) -- (12,2);
		\draw[thick,red] (11,4) -- (12,4);
		\draw[thick,red] (11,7) -- (10.5,6) -- (11.5,6);  
		\draw[thick] (8.5,1) -- (9,2) -- (9.5,3) -- (8.5,3);
		\draw[thick] (9.5,3) -- (9,4);
		\draw[thick] (8.5,5) -- (9.5,5) -- (9.5,6) -- (8.5,6) ;
		\draw[thick] (8.5,8) -- (9,7) -- (9.5,6) -- (10.5,6) ;
		\draw[thick] (10,4) -- (10.5,5) -- (11,4);	
		\draw[thick] (10.5,3) -- (10.5,2) -- (11.5,1);
		\draw[thick,->] (12.5,4.5) -- (13.5,4.5);
		\draw[fill=black] (7.5,3) circle (2.5pt);
		\draw[fill=black] (7.5,6) circle (2.5pt);
		\draw[fill=black] (8.5,1) circle (2.5pt);
		\draw[fill=black] (8.5,3) circle (2.5pt);
		\draw[fill=black] (8.5,5) circle (2.5pt);
		\draw[fill=black] (8.5,6) circle (2.5pt);
		\draw[fill=black] (8.5,8) circle (2.5pt);
		\draw[fill=black] (9,2) circle (2.5pt);
		\draw[fill=black] (9,4) circle (2.5pt);
		\draw[fill=black] (9,7) circle (2.5pt);
		\draw[fill=black] (9,9) circle (2.5pt);
		\draw[fill=black] (9.5,0.5) circle (2.5pt);
		\draw[fill=black] (9.5,3) circle (2.5pt);
		\draw[fill=black] (9.5,5) circle (2.5pt);
		\draw[fill=black] (9.5,6) circle (2.5pt);
		\draw[fill=black] (10,4) circle (2.5pt);
		\draw[fill=black] (10.5,2) circle (2.5pt);
		\draw[fill=black] (10.5,3) circle (2.5pt);
		\draw[fill=black] (10.5,5) circle (2.5pt);
		\draw[fill=black] (10.5,6) circle (2.5pt);
		\draw[fill=black] (11,4) circle (2.5pt);
		\draw[fill=black] (11,7) circle (2.5pt);
		\draw[fill=black] (11.5,1) circle (2.5pt);
		\draw[fill=black] (11.5,6) circle (2.5pt);
		\draw[fill=black] (12,2) circle (2.5pt);
		\draw[fill=black] (12,4) circle (2.5pt);
		\draw[thick] (14,3) -- (15,3);
		\draw[thick] (14,0.5) -- (14.5,1.5) -- (15,2.5);
		\draw[thick] (14.5,4.5) -- (15,3.5);
		\draw[thick] (14,5) -- (15,5);
		\draw[thick] (14.5,8) -- (15,7) -- (15.5,6) -- (15.5,5);	
		\draw[thick] (14.5,6) -- (16.5,6) ;
		\draw[thick] (16.5,3) -- (16.5,2) -- (17.5,1);
		\draw[thick] (16,4) -- (16.5,5) -- (17,4) ;
		\draw[fill=black] (14,3) circle (2.5pt);
		\draw[fill=black] (14,5) circle (2.5pt);
		\draw[fill=black] (14,0.5) circle (2.5pt);
		\draw[fill=black] (14.5,4.5) circle (2.5pt);
		\draw[fill=black] (14.5,6) circle (2.5pt);
		\draw[fill=black] (14.5,8) circle (2.5pt);
		\draw[fill=black] (14.5,1.5) circle (2.5pt);
		\draw[fill=black] (15,3) circle (2.5pt);
		\draw[fill=black] (15,5) circle (2.5pt);
		\draw[fill=black] (15,7) circle (2.5pt);
		\draw[fill=black] (15,2.5) circle (2.5pt);
		\draw[fill=black] (15,3.5) circle (2.5pt);
		\draw[fill=black] (15.5,5) circle (2.5pt);
		\draw[fill=black] (15.5,6) circle (2.5pt);
		\draw[fill=black] (16,4) circle (2.5pt);
		\draw[fill=black] (16.5,2) circle (2.5pt);
		\draw[fill=black] (16.5,3) circle (2.5pt);
		\draw[fill=black] (16.5,5) circle (2.5pt);
		\draw[fill=black] (16.5,6) circle (2.5pt);
		\draw[fill=black] (17.5,1) circle (2.5pt);
		\draw[fill=black] (17,4) circle (2.5pt);
		\end{tikzpicture} 
		\caption{The strategy for Isolator in the Toucher-Isolator game on a tree and the modification of the graph, where the red and blue edges are Toucher and Isolator edges respectively.} \label{figure:thm}
	\end{figure}
	
	\begin{lem} \label{lem:alpha} 
		For non-negative numbers  $m$, $k$ and $l$, 
		\begin{center}
			$\alpha(m,k,l)\geq\left\lfloor\dfrac{m+4k-3l+4}{5}\right\rfloor$. 
		\end{center}
	\end{lem}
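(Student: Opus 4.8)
The plan is to prove the equivalent statement $\alpha(m,k,l)\ge\left\lfloor(\Phi+4)/5\right\rfloor$ with the potential $\Phi:=m+4k-3l$, by strong induction on the number of edges $m$. To make the induction self‑supporting I would prove a slightly stronger statement about a \emph{generalized} position, in which some non‑leaf vertices are already \emph{primed}: Isolator has claimed an incident edge while Toucher has not, so such a vertex becomes isolated exactly when all its still‑unclaimed incident edges fall to Isolator. The key convention is that a primed vertex is never counted among the $l$ leaves even when only one of its edges remains (and, if the arithmetic demands it, each primed vertex carries a small fixed bonus in $\Phi$); at the start no vertex is primed, so $\Phi$ agrees with the quantity in the statement. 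For one \emph{round} — Isolator's move followed by Toucher's reply — let $p\in\{0,1\}$ be the number of non‑leaf vertices that Isolator isolates, and let $F'$ be the position obtained by deleting the two claimed edges and discarding every vertex whose fate is now decided. Since the game value obeys $\alpha(F)\ge p+\alpha(F')$ for Isolator's chosen move against each reply, the inductive step closes as soon as
\[
\Phi(F)-\Phi(F')\le 5p,
\]
because then $p+\left\lfloor(\Phi(F')+4)/5\right\rfloor\ge\left\lfloor(\Phi(F)+4)/5\right\rfloor$. Thus the whole task is to design Isolator's move and to bound the change of $\Phi$ under every reply of Toucher.

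I would have Isolator follow two priorities. First, if some primed vertex has only one unclaimed incident edge, Isolator claims that edge and isolates it, scoring $p=1$; this is the \emph{only} way points are ever made, and it is always available in the degenerate positions below. Otherwise Isolator claims an \emph{interior} edge $xy$, one whose endpoints are both untouched non‑leaves, which primes $x$ and $y$ simultaneously; as Toucher's reply is a single edge meeting at most one of $x,y$, at least one of them survives as a primed vertex for the next round. To locate such an edge, take a longest path $p_0p_1\cdots p_D$ in a component that is not a star: since $p_0$ is a leaf while $p_1,p_2$ both have degree at least two, the edge $p_1p_2$ is interior, and maximality of the path forces every neighbour of $p_1$ other than $p_2$ to be a leaf, pinning down the local configuration Isolator exploits.

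The forests with no interior edge are exactly the disjoint unions of stars and isolated vertices (a tree has an interior edge iff its diameter is at least three), and these are the base cases. On a star the unique non‑leaf is touched immediately and contributes nothing, whereas each isolated vertex is an untouched non‑leaf and is therefore scored; a direct computation gives $\alpha\ge\left\lfloor(\Phi+4)/5\right\rfloor$ for all such forests and in particular pins the additive constant $+4$, since one free component is worth one point. For the inductive step one enumerates Toucher's replies to the interior move $p_1p_2$ — touch $p_1$, touch $p_2$, or play in another component — and checks $\Phi(F)-\Phi(F')\le 5p$ in each. The mechanism is that deleting an edge of a forest always separates a component, raising $4k$, while resolved leaves lower $3l$ and the surviving primed vertex is exempted from the leaf count; the coefficients $+1,+4,-3$ are calibrated so that these contributions balance the two edge‑deletions of a round against one isolated vertex per five.

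The hard part is the bookkeeping forced by the turn order. A round begun from an unprimed position cannot isolate anything, so every point is cashed one round \emph{after} the priming move, and the argument must certify both that Isolator's interior move always leaves a surviving primed vertex and that this survivor is exempted from the leaf count precisely so that a scoreless round never lowers $\left\lfloor(\Phi+4)/5\right\rfloor$. The delicate replies are those in which Toucher both splits off a new component and creates a new (dead) leaf, where $\Phi$ threatens to drop by one; showing that the primed‑vertex convention (and, if necessary, the fixed bonus) exactly absorbs this drop — uniformly over all replies, over the possible degrees of $p_1$ and $p_2$, and over the interaction with the remaining components — is the crux of the proof and the point at which the precise form $m+4k-3l$ is forced.
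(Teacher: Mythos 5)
Your overall architecture --- the potential $\Phi=m+4k-3l$, induction on $m$, and primed vertices to emulate Isolator's consecutive play --- is the same as the paper's, but your central per-round invariant $\Phi(F)-\Phi(F')\le 5p$ is false for the strategy you describe, and no fixed bonus $c$ per primed vertex can repair it. Test it on a long path $p_0p_1\cdots p_{n-1}$, where $\Phi=(n-1)+4-6=n-3$. You prime $p_1p_2$; Toucher replies $p_2p_3$. After deleting both edges and discarding the decided vertex $p_2$, the position is the edge $p_0p_1$ with $p_1$ primed (exempt from the leaf count) plus the path $p_3\cdots p_{n-1}$, giving $\Phi'=(1+4-3)+\bigl((n-4)+4-6\bigr)+c=n-4+c$; this scoreless round has $\Phi(F)-\Phi(F')=1-c$, forcing $c\ge 1$. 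But the cashing round is already tight with no bonus: from a path $p_2\cdots p_{n-1}$ with $p_2$ primed, $\Phi=(n-3)+4-3+c=n-2+c$, your priority 1 forces Isolator to claim $p_2p_3$ (score $p=1$), Toucher replies $p_3p_4$, and the residue $p_4\cdots p_{n-1}$ has $\Phi=(n-5)+4-6=n-7$, a drop of $5+c$, forcing $c\le 0$. The constraints contradict each other, and amortizing the two rounds gives a drop of $6$ per point, so your certificate yields only about $m/6$ isolated vertices on the long path, short of the $\lfloor(m+2)/5\rfloor$ the lemma demands there --- the extremal case is exactly where the scheme breaks. The root cause is tempo: Toucher's adjacent block makes Isolator spend two moves per point, and a strictly round-by-round potential cannot see the compensation.

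The paper closes precisely this hole with three devices your sketch lacks. First, Isolator's entire maximal run of consecutive edges is accounted in one batch: the run is deleted as a single Isolator path with profit $p_I$ determined by whether its endpoints are leaves or big vertices, rather than edge by edge. Second, on a path component of length at least $4$ Isolator opens at the \emph{third} edge $e_3$ from a leaf, not the second: if Toucher blocks with the leaf edge $e_1$ he pays $p_T\ge 2$, and if he blocks with $e_2$ the stranded edge $e_1$ becomes a length-$1$ path component whose pruning yields a credit $p_L\ge 1$ while Isolator still scores $r\ge 1$ extending to the other side. Your longest-path edge $p_1p_2$ is exactly the opening that fails this accounting. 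Third, the pruning credit $p_L$ for length-$1$ components (Proposition~\ref{prop:profit}(iii) in the paper) has no counterpart in your scheme, yet it is needed in the delicate replies you yourself flag. You correctly identified the crux (\enquote{the drop of one} under splitting replies) but deferred its verification; carried out, that verification refutes the invariant rather than confirming it, so the missing ideas are the batched accounting of runs, the $e_3$ opening, and the $p_L$ credit.
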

	
	The strategy for Isolator in the non-leaf Isolator-Toucher game is that he claims consecutive edges which  immediately creates an untouched vertex in every move except the first one for as long as he can, and then he repeats in a different part of the forest. The key step is to determine which part of the forest is the most profitable for Isolator to play in. We do this by breaking down the gains and losses in each move of both players.
 
	The rest of this paper is organized as follows. Section~\ref{section-the proofs} is devoted to proving Lemma~\ref{lem:alpha} and then applying it to prove Theorem~\ref{thm:uT}. In Section~\ref{section-conclusion}, we give some concluding remarks and mention related interesting questions.
	
	\section{The Proofs}\label{section-the proofs}

	Before proving Lemma~\ref{lem:alpha} and Theorem~\ref{thm:uT}, we give some definitions necessary for the proofs and make observations regarding how to modify the graph after deleting some edges, to keep the game equivalent to the original game, and how much Isolator gains in each move of both players. 
	
	For convenience, we first give some names to vertices and edges in a forest.  A \emph{leaf} is a vertex of degree $1$. A \emph{small vertex} is a vertex of degree $2$. A \emph{big vertex} is a vertex of degree at least~$3$. A \emph{big edge} is an edge incident to a big vertex. A \emph{leaf edge} is an edge incident to a leaf. An \textit{internal vertex} of a subgraph is a vertex  adjacent to no vertex outside the subgraph.
	
	We also give some names to paths in a forest. A \emph{path component} is a  path such that the non-endpoint vertices are internal and both endpoints are leaves. A \emph{branch} is a path such that the non-endpoint vertices are internal and both endpoints are big. A \emph{twig} is a path such that the non-endpoint vertices are internal and one endpoint is a leaf while the other is big.
	
	Finally, we define some game related terms. A \emph{Toucher edge} is an edge claimed by Toucher. An \emph{Isolator edge} is an edge claimed by Isolator. An \textit{Isolator subgraph} is a subgraph whose edges are Isolator edges. An \emph{Isolator path} is an Isolator subgraph which is either a path component, a brach or a twig. A \emph{partially played graph} is a graph where each edge is either a Toucher edge, an Isolator edge or an unclaimed edge. 
	
	Now we say how a partially played graph should be modified after deleting a Toucher edge or an Isolator subgraph, in order to keep the game equivalent to the original game. For a partially played graph $G$ with a Toucher edge $uv$, we define $G\circleddash uv$ to be the partially played graph obtained from $G$ by
	\begin{itemize}
		\item deleting the vertices $u$ and $v$,  
		\item adding new vertices $u_1,\dots,u_{d(u)-1}$ and joining $u_i$ to $u'_i$ where $N(u)\setminus \{v\}=\{u'_1,\dots,u'_{d(u)-1}\}$ such that if $uu'_i$ has been claimed by a player, then we let $u_iu'_i$ be claimed by the same player, 
		\item adding new vertices $v_1,\dots,v_{d(v)-1}$ and joining $v_i$ to $v'_i$ where $N(v)\setminus \{u\}=\{v'_1,\dots,v'_{d(v)-1}\}$ such that if $vv'_i$ has been claimed by a player, then we let $v_iv'_i$ be claimed by the same player,
	\end{itemize}
	 where $N(v)$ denotes the neighborhood of vertex $v$ and $d(v)$ denotes the degree of vertex $v$.
	
	\begin{figure}[H] \centering
		\begin{tikzpicture}[baseline=2ex]
		\draw[thick,red] (2,2) -- (3,2);
		\draw[thick,red] (9,2.5) -- (10,2.5);
		\draw[thick,red] (1,2.5) -- (2,2);
		\draw[thick,red] (5,1.5) -- (4,2);
		\draw[thick,red](13.5,1.5) -- (12.5,2); 
		\draw[thick,blue] (9,1.5) -- (10,1.5);
		\draw[thick,blue] (4,2) -- (5,2);
		\draw[thick,blue] (1,1.5) -- (2,2);
		\draw[thick,blue] (12.5,2) -- (13.5,2);
		\draw[thick] (0,1) -- (1,1.5) -- (0,2);
		\draw[thick] (2,2) -- (1,2);
		\draw[thick] (0,3) -- (1,2.5);
		\draw[thick] (3,2) -- (4,2);
		\draw[thick] (4,2) -- (5,2.5);
		\draw[thick,->] (6,2) -- (7,2);
		\draw[thick] (8,1)  -- (9,1.5) -- (8,2);
		\draw[thick] (9,2) -- (10,2);
		\draw[thick] (9,2.5) -- (8,3);
		\draw[thick] (11.5,2) -- (12.5,2);
		\draw[thick] (12.5,2) -- (13.5,2.5); 
		\draw[fill=black] (0,1) circle (2.5pt);
		\draw[fill=black] (0,2) circle (2.5pt);
		\draw[fill=black] (0,3) circle (2.5pt);
		\draw[fill=black] (1,1.5) circle (2.5pt);
		\draw[fill=black] (1,2) circle (2.5pt);
		\draw[fill=black] (1,2.5) circle (2.5pt);
		\draw[fill=black] (2,2) circle (2.5pt) node[above=2pt] {$u$};
		\draw[fill=black] (3,2) circle (2.5pt) node[above=2pt] {$v$};
		\draw[fill=black] (4,2) circle (2.5pt);
		\draw[fill=black] (5,2) circle (2.5pt);
		\draw[fill=black] (5,1.5) circle (2.5pt);
		\draw[fill=black] (5,2.5) circle (2.5pt);
		\draw[fill] (2.5,0.5)  node[above=2pt] {$G$};
		\draw[fill=black] (8,1) circle (2.5pt);
		\draw[fill=black] (8,2) circle (2.5pt);
		\draw[fill=black] (8,3) circle (2.5pt);
		\draw[fill=black] (9,1.5) circle (2.5pt);
		\draw[fill=black] (9,2) circle (2.5pt);
		\draw[fill=black] (9,2.5) circle (2.5pt);
		\draw[fill=black] (10,1.5) circle (2.5pt) node[right=2pt] {$u_3$};
		\draw[fill=black] (10,2) circle (2.5pt) node[right=2pt] {$u_2$};
		\draw[fill=black] (10,2.5) circle (2.5pt) node[right=2pt] {$u_1$};
		\draw[fill=black] (11.5,2) circle (2.5pt) node[left=2pt] {$v_1$};
		\draw[fill=black] (12.5,2) circle (2.5pt);
		\draw[fill=black] (13.5,1.5) circle (2.5pt);
		\draw[fill=black] (13.5,2) circle (2.5pt);
		\draw[fill=black] (13.5,2.5) circle (2.5pt);
		\draw[fill] (11,0.5)  node[above=2pt] {$G\circleddash uv$};
		\end{tikzpicture}  \caption{The partially played graph $G\circleddash uv$, where the red and blue edges are Toucher and Isolator edges respectively.}
	\end{figure}
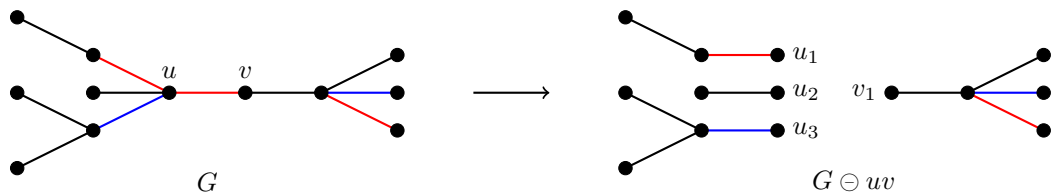
	
	For a partially played graph $G$ with an Isolator subgraph $H$, we define $G\circleddash H$ to be the partially played graph obtained from $G$ by deleting the edges of $H$ and the internal vertices of~$H$.

	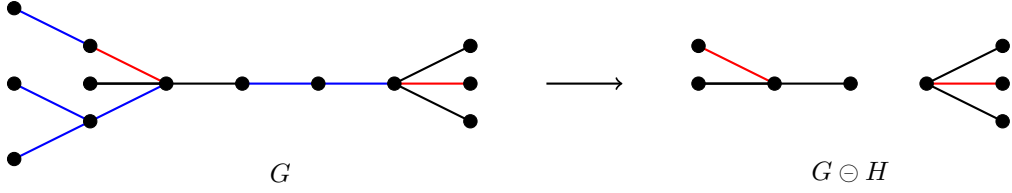
\begin{figure}[H] \centering
		\begin{tikzpicture}[baseline=2ex] 	
		\draw[thick,blue] (0,1) -- (1,1.5) -- (0,2);
		\draw[thick,blue] (0,3) -- (1,2.5);
		\draw[thick,blue] (3,2) -- (4,2) -- (5,2);
		\draw[thick,blue] (2,2) -- (1,1.5);	
		\draw[thick,red] (1,2.5) -- (2,2);	
		\draw[thick,red] (9,2.5) -- (10,2);	
		\draw[thick,red] (12,2) -- (13,2);
		\draw[thick,red] (5,2) --(6,2);
		\draw[thick] (1,2) -- (3,2);
		\draw[thick] (1,2) -- (2,2);
		\draw[thick] (6,1.5) -- (5,2) -- (6,2.5);
		\draw[thick,->] (7,2) -- (8,2);
		\draw[thick] (10,2) -- (9,2);
		\draw[thick] (9,2) -- (10,2) -- (11,2) ;
		\draw[thick] (13,1.5) -- (12,2) -- (13,2.5);
		\draw[fill=black] (0,1) circle (2.5pt);
		\draw[fill=black] (0,2) circle (2.5pt);
		\draw[fill=black] (0,3) circle (2.5pt);
		\draw[fill=black] (1,1.5) circle (2.5pt);
		\draw[fill=black] (1,2) circle (2.5pt);
		\draw[fill=black] (1,2.5) circle (2.5pt);
		\draw[fill=black] (2,2) circle (2.5pt);
		\draw[fill=black] (3,2) circle (2.5pt);
		\draw[fill=black] (4,2) circle (2.5pt);
		\draw[fill=black] (5,2) circle (2.5pt);
		\draw[fill=black] (6,2) circle (2.5pt);
		\draw[fill=black] (6,1.5) circle (2.5pt);
		\draw[fill=black] (6,2.5) circle (2.5pt);
		\draw[fill] (3.5,0.5)  node[above=2pt] {$G$};
		\draw[fill=black] (9,2) circle (2.5pt);
		\draw[fill=black] (9,2.5) circle (2.5pt);
		\draw[fill=black] (10,2) circle (2.5pt);
		\draw[fill=black] (11,2) circle (2.5pt);
		\draw[fill=black] (12,2) circle (2.5pt);
		\draw[fill=black] (13,2) circle (2.5pt);
		\draw[fill=black] (13,1.5) circle (2.5pt);
		\draw[fill=black] (13,2.5) circle (2.5pt);
		\draw[fill] (11,0.5)  node[above=2pt] {$G\circleddash H$};
		\end{tikzpicture} \caption{The partially played graph $G\circleddash H$, where the red and blue edges are Toucher and Isolator edges respectively.}
	\end{figure}
	
	\begin{prop} \label{prop:equi}
		\begin{enumerate}[(i)]
			\item The non-leaf Isolator-Toucher game on a partially played graph $G$ with a Toucher edge $e$ is equivalent to that on $G\circleddash e$.
			\item The Toucher-Isolator and the non-leaf Isolator-Toucher games on a partially played graph $G$ with an Isolator subgraph $H$ with $r$ internal vertices are equivalent to their respective versions on $G\circleddash H$ with an extra score of $r$.
			\item The score of the non-leaf Isolator-Toucher game on a partially played graph $G$ when both players play optimally is equal to that on $G-U$, where $U$ is the set of vertices of path components of length $1$ in $G$.
		\end{enumerate}
		
	\end{prop}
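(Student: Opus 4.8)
The unifying idea is that an edge which has already been claimed plays no further role in the remaining game except through the touched/untouched status it has already fixed, and that deleting an already-claimed edge never disturbs whether any surviving vertex is touched. Indeed, a vertex is touched precisely when some incident edge is a Toucher edge, so deleting a non-Toucher edge cannot change its status, while deleting a Toucher edge $uv$ affects only $u$ and $v$, both of which are already and permanently touched. The plan for all three parts is therefore the same: exhibit a label-preserving bijection between the edges of the original board and those of the modified board, transfer strategies across it so that the two games pass through corresponding sequences of moves, and finally compare the two final scores vertex by vertex.

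For part (i), since $uv$ is a Toucher edge it is already claimed, so it is a legal move in neither game, and every other edge of $G$ at $u$ (resp.\ $v$) corresponds under $\circleddash$ to $u_iu_i'$ (resp.\ $v_iv_i'$) carrying the same Toucher/Isolator/unclaimed label. This gives a bijection between the edges of $G$ other than $uv$ and the edges of $G\circleddash uv$ preserving all labels; in particular it matches the unclaimed edges and hence the turn parity, so a move on $e$ corresponds to the move on its image and the two games end simultaneously. For the score I would argue vertex by vertex: every $w\notin\{u,v\}$ keeps its degree and its multiset of incident labels, hence its leaf-status and its touched-status, and so contributes identically on both sides; the deleted vertices $u,v$ are touched and contribute $0$; and each new vertex $u_i,v_i$ has degree $1$, so it is a leaf contributing $0$ to the non-leaf score whatever its status. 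Thus the scores agree with offset $0$.

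For part (ii), every edge of $H$ is an Isolator edge, hence already claimed, so again no legal move is destroyed and the identity on the edges outside $H$ is a label-preserving bijection onto the edges of $G\circleddash H$ preserving turn parity. An internal vertex of $H$ is adjacent only to vertices of $H$, so all its incident edges are Isolator edges of $H$; it is therefore untouched, and, those edges being already claimed, it stays untouched until the end of the game. These are exactly the $r$ vertices removed by $\circleddash H$, and since only Isolator edges are deleted, every surviving vertex keeps its touched-status. I would then show that removing the $r$ untouched internal vertices lowers the Toucher--Isolator score (all untouched vertices) by exactly $r$, and that the non-leaf score drops by the same amount.

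The main obstacle is precisely this last bookkeeping for the non-leaf score: deleting the edges of $H$ lowers the degrees of the surviving (endpoint) vertices of $H$, which can change their leaf-status or even isolate them, and one must check that these changes, together with the removed internal vertices, net out to exactly $r$. I would settle this by a short case analysis on the endpoints of the relevant path pieces (leaf versus internal, small versus big), invoking the definitions of path component, branch and twig together with the convention that leaves never contribute to the non-leaf score, so that any newly created leaf or isolated vertex is score-neutral. Finally, part (iii) is the most direct: each length-$1$ path component is a $K_2$ whose two vertices are leaves, which are never counted in the non-leaf score, and whose single edge, when claimed, touches only those two leaves and lies in a separate component, making it a score-neutral move. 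I would obtain equality of the two optimal scores by transferring strategies and pairing off these neutral moves, the only delicate point being a brief tempo/parity argument confirming that the presence of the extra neutral moves cannot alter the optimal value.
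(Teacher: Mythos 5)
Your proposal follows the paper's own proof in all three parts: the paper, too, argues via the label-preserving edge bijection plus a vertex-by-vertex score count --- in (i) the endpoints of the Toucher edge are touched and the new vertices $u_i,v_i$ are leaves, so none of them is counted in either score; in (ii) the untouched internal vertices of $H$ supply the extra $r$; in (iii) a path component of length $1$ consists of two leaves, so claiming its edge gains nothing for either player. If anything, your write-up is more careful than the paper's three-sentence argument, e.g.\ your tempo/parity remark in (iii), which the paper elides entirely.

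One concrete caveat on the step you yourself flag as the main obstacle in (ii). Your blanket claim that ``any newly created leaf or isolated vertex is score-neutral'' is false for an arbitrary Isolator subgraph $H$: a vertex that is a non-leaf of $G$ but becomes a leaf of $G\circleddash H$ contributes $0$ on the modified board, yet on the original board it may still end the game untouched and add $1$ to the non-leaf score --- the ``leaves never count'' convention only controls the modified side. Concretely, let $G$ be the path $abcd$ with $bc$ an Isolator edge and $H=\{bc\}$; then $H$ has $r=0$ internal vertices, the non-leaf game on $G$ has score $1$ (whichever player moves first, one of $b,c$ ends with both incident edges Isolator), while on $G\circleddash H$ all four vertices are leaves and the score is $0$. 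So statement (ii), read for arbitrary $H$, needs the additional hypothesis that every surviving vertex whose degree drops either keeps degree at least $2$ or is already touched. Your proposed case analysis on path components, branches and twigs does secure this in the lemma's application --- big endpoints keep degree $\geq 2$, and leaf endpoints are internal, hence deleted --- but it does not literally cover the theorem's application, where $H$ is a general union of Isolator edges; there the degree-dropped survivors are exactly the meta-leaves, which are touched because Isolator stops only when every meta-leaf edge is a Toucher edge, a fact the paper invokes separately from Proposition~\ref{prop:equi} rather than inside it. The paper's own proof of (ii) glosses this point just as briefly, so this is a shared imprecision rather than a divergence in approach; to make your proof complete, replace the score-neutrality claim by the explicit touched-or-degree-$\geq 2$ condition and verify it in each use case.
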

	
	\begin{proof}
		$(i)$ Clearly, there is a bijection between the edges of $G-e$ and $G\circleddash e$. The endpoints of the Toucher edge $e$ in the game on $G$ and the new leaves in the game on $G\circleddash e$ are not counted in the score of each game.
		
		$(ii)$ Clearly, there is a bijection between the edges of $G-E(H)$ and $G\circleddash H$. Deleting an Isolator edge does not change the touch/untouched status of its endpoints. An extra score of $r$ comes from the internal vertices on $H$.
		
		$(iii)$ A player gains nothing by claiming  a path component of length $1$ because its vertices are leaves which are not counted in the score.
	\end{proof}

	Next, in order to determine which part of the forest is the most profitable for Isolator to play in, it is useful to calculate the changes in the number of edges, components and leaves of the forest when deleting a Toucher edge or an Isolator path. Moreover, deleting path components of length $1$ also produces a profit.
	\begin{prop} \label{prop:profit} 
		\begin{enumerate} [(i)]
			\item Let $G$ be a partially played graph which is a forest with $m$ edges, $k$ components and $l$ leaves, and let $uv$ be a Toucher edge in $G$. Suppose $G\circleddash uv$ is a forest with $m+\Delta m$ edges, $k+\Delta k$ components and $l+\Delta l$ leaves. Then the change in $m+4k-3l$ is as in Table~\ref{table:toucher} and the profit $p_T(G,uv)=\Delta(m+4k-3l)+3$ is non-negative.
		\begin{table}[H] \centering
			\resizebox{\textwidth}{!}{	\begin{tabular}{|p{12m}|p{12mm}|c|r|r|r|c|}
					\hline
					\multicolumn{2}{|c|}{Toucher edge $uv$} & \multirow{2}{*}{$\Delta m$} & \multicolumn{1}{c|}{\multirow{2}{*}{$\Delta k$}} & \multicolumn{1}{c|}{\multirow{2}{*}{$\Delta l$}} & \multicolumn{1}{c|}{\multirow{2}{*}{$\Delta(m+4k-3l)$}} & \multicolumn{1}{c|}{\multirow{2}{*}{$p_T(G,uv)$}} \\ \cline{1-2}
					\multicolumn{1}{|c|}{\phantom{text}$u$\phantom{text}} & \multicolumn{1}{c|}{\phantom{text}$v$\phantom{text}}     &   & \multicolumn{1}{c|}{}   & \multicolumn{1}{c|}{}  & \multicolumn{1}{c|}{}     & \multicolumn{1}{c|}{}                   \\ \hline
					\multicolumn{1}{|c|}{small}   & \multicolumn{1}{c|}{small}   & $-1$ & $1$ & $2$ & $-3$&  \phantom{$\geq$ }$0$ \\ \hline	
					\multicolumn{1}{|c|}{small}   &\multicolumn{1}{c|}{big}	  & $-1$ & $d(v)-1$ & $d(v)$ & $d(v)-5\geq-2$  & $\geq1$ \\ \hline
					\multicolumn{1}{|c|}{small}   &\multicolumn{1}{c|}{leaf}	 	  & $-1$ & $0$ & $0$ & $-1$ &  \phantom{$\geq$ }$2$\\ \hline
					\multicolumn{1}{|c|}{big}	  & \multicolumn{1}{c|}{big} & $-1$ & $d(u)+d(v)-3$ & $d(u)+d(v)-2$ &$d(u)+d(v)-7\geq-1$ &  $\geq2$\\ \hline	
					\multicolumn{1}{|c|}{big}	  & \multicolumn{1}{c|}{leaf}	      & $-1$ & $d(u)-2$ & $d(u)-2$ & $d(u)-3\geq\phantom{-}0$ & $\geq3$ \\ \hline
					\multicolumn{1}{|c|}{leaf}	  & \multicolumn{1}{c|}{leaf}		  & $-1$ & $-1$ & $-2$  & $1$ & \phantom{$\geq$ }$4$ \\ \hline		 
			\end{tabular}}
				\caption{The profit of deleting a Toucher edge.}\label{table:toucher}
			\end{table}
			\item Let $G$ be a partially played graph which is a forest with $m$ edges, $k$ components and $l$ leaves, and let $P$ be an Isolator path of length $r+1$ in $G$. Suppose $G\circleddash P$ is a forest with $m+\Delta m$ edges, $k+\Delta k$ components and $l+\Delta l$ leaves. Then the change in $m+4k-3l$ is as in Table~\ref{table:isolator} and the profit $p_I(G,P)=\Delta(m+4k-3l)+r-1$ is non-negative.
			\begin{table}[H]   \centering
			\begin{tabular}{|c|c|c|r|r|c|c|}
				\hline
				\multicolumn{2}{|c|}{$u,v$-Isolator path} & \multirow{2}{*}{\phantom{t}$\Delta m$\phantom{t}} & \multicolumn{1}{c|}{\multirow{2}{*}{\phantom{t}$\Delta k$\phantom{t}}} &  \multicolumn{1}{c|}{\multirow{2}{*}{\phantom{t}$\Delta l$\phantom{t}}} & \multicolumn{1}{c|}{\multirow{2}{*}{\phantom{t}$\Delta(m+4k-3l)$\phantom{t}}} & \multicolumn{1}{c|}{\multirow{2}{*}{\phantom{t}$p_I(G,P)$\phantom{t}}} \\ \cline{1-2}
				\multicolumn{1}{|c|}{\phantom{text}$u$\phantom{text}} & \multicolumn{1}{c|}{\phantom{text}$v$\phantom{text}}     &   & \multicolumn{1}{c|}{}   & \multicolumn{1}{c|}{}  & \multicolumn{1}{c|}{}     & \multicolumn{1}{c|}{}                \\ \hline
				\multicolumn{1}{|c|}{leaf}	  & \multicolumn{1}{c|}{leaf} & $-(r+1)$ & $-1$ & $-2$  & $-r+1$ & $0$ \\ \hline	\multicolumn{1}{|c|}{big}	  & \multicolumn{1}{c|}{leaf} & $-(r+1)$ & $0$  & $-1$  & $-r+2$ & $1$ \\ \hline
				\multicolumn{1}{|c|}{big}	  & \multicolumn{1}{c|}{big}  & $-(r+1)$  &$1$  & $0$   & $-r+3$ & $2$ \\ \hline
			\end{tabular} 
				\caption{The profit of deleting an Isolator path.} \label{table:isolator}
			\end{table}
			\item Let $G$ be a partially played graph which is a forest with $m$ edges, $k$ components, $l$ leaves and $q$ path components of length $1$.  Suppose $G-U$ is a forest with $m+\Delta m$ edges, $k+\Delta k$ components and $l+\Delta l$ leaves. Then the change in $m+4k-3l$ is as in Table~\ref{table:p2} and the profit $p_L(G,U)=\Delta(m+4k-3l)$ is equal to $q$.
			\begin{table}[H]
				\centering
				\begin{tabular}{|c|r|r|c|c|}
					\hline
					\multicolumn{1}{|c|}{\phantom{t}$\Delta m$\phantom{t}} & \multicolumn{1}{c|}{\phantom{t}$\Delta k$\phantom{t}} &  \multicolumn{1}{c|}{\phantom{t}$\Delta l$\phantom{t}}  & \multicolumn{1}{c|}{\phantom{t}$\Delta(m+4k-3l)$\phantom{t}}  & \multicolumn{1}{c|}{\phantom{t}$p_L(G,U)$\phantom{t}} \\ \hline
					$-q$ & $-q$ & $-2q$  & $q$  & $q$\\ \hline		
				\end{tabular}
				\caption{The profit of deleting $q$ path components of length $1$.} \label{table:p2}
			\end{table}
		\end{enumerate}
	\end{prop}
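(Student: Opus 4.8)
The plan is to prove each of the three parts by directly computing the changes $\Delta m$, $\Delta k$, $\Delta l$ from the explicit description of the relevant deletion operation, and then reading off $\Delta(m+4k-3l)$ and the profit. The one structural fact I would use throughout is that for a forest the number of components equals the number of vertices minus the number of edges, so that $\Delta k=\Delta|V|-\Delta m$; this reduces every component count to a count of the vertices and edges that appear or disappear, which is what the operations describe most transparently.

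For part $(i)$ I would first record a preliminary observation about $G\circleddash uv$. Since $G$ is a forest and $uv$ is an edge, $u$ and $v$ have no common neighbour (otherwise $u,v$ and that neighbour would form a triangle), so the $d(u)-1$ and $d(v)-1$ other neighbours are all distinct. Under the operation each such neighbour $u'_i$ loses the edge $uu'_i$ but gains the pendant edge $u_iu'_i$, so its degree, and hence its leaf/non-leaf status, is unchanged. Thus the only vertices whose leaf status changes are $u$ and $v$ (deleted) and the new pendant vertices (all leaves). Writing $\varepsilon_u,\varepsilon_v\in\{0,1\}$ for whether $u,v$ are leaves, the counts are $\Delta m=-1$ (we lose the $d(u)+d(v)-1$ edges incident to $u$ or $v$ and add $d(u)+d(v)-2$ pendant edges), $\Delta|V|=d(u)+d(v)-4$, hence $\Delta k=d(u)+d(v)-3$, and $\Delta l=(d(u)+d(v)-2)-\varepsilon_u-\varepsilon_v$. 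Substituting gives the single closed form $\Delta(m+4k-3l)=d(u)+d(v)-7+3\varepsilon_u+3\varepsilon_v$, of which each of the six rows of Table~\ref{table:toucher} is an immediate specialisation, and $p_T=\Delta(m+4k-3l)+3$ is non-negative in every row upon inserting $d\ge 3$ for a big vertex and $d=2,1$ for a small vertex and a leaf.

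For part $(ii)$ I would split into the three types of Isolator path according to its endpoints $u,v$. The deletion $G\circleddash P$ removes the $r+1$ edges of $P$ together with its internal vertices, where a leaf endpoint counts as internal (all its neighbours lie in $P$) while a big endpoint does not. In every case $\Delta m=-(r+1)$, and the rest follows from the topology of a forest: deleting a path component (leaf--leaf) removes an entire component, so $\Delta k=-1$, $\Delta l=-2$; deleting a twig (big--leaf) leaves the host component connected, so $\Delta k=0$, and removes the single leaf endpoint while the big endpoint drops only to degree $\ge 2$ and so does not become a leaf, giving $\Delta l=-1$; deleting a branch (big--big) severs the unique path joining the two big ends and splits one component into two, so $\Delta k=+1$, with neither endpoint becoming a leaf, giving $\Delta l=0$. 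These yield $\Delta(m+4k-3l)=-r+1,\,-r+2,\,-r+3$ and hence $p_I\in\{0,1,2\}$, matching Table~\ref{table:isolator}. Part $(iii)$ is the shortest: $q$ path components of length $1$ account for $q$ edges, $q$ components and $2q$ leaves, so $\Delta m=-q$, $\Delta k=-q$, $\Delta l=-2q$, whence $\Delta(m+4k-3l)=q=p_L(G,U)$.

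The arithmetic is routine; the point that requires genuine care, and the place I would be most careful, is the bookkeeping of leaf status under the two operations. Concretely, in part $(i)$ I must confirm that the surviving neighbours retain their degrees so that no spurious leaves are created or destroyed, and in part $(ii)$ I must check in the twig and branch cases that a big endpoint, after losing a single incident edge, still has degree at least $2$ and therefore does not turn into a leaf, and that the connectivity claims (component deleted, preserved, or split) hold precisely because $G$ is a forest. Once these status checks are pinned down, every entry of the three tables and the non-negativity of each profit follow by substitution.
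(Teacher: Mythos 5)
Your proof is correct and follows essentially the same route as the paper, whose own proof simply records these case-by-case counts in Tables~\ref{table:toucher}--\ref{table:p2}; the leaf-status and connectivity checks you flag (surviving neighbours keep their degrees in part $(i)$, a big endpoint keeps degree at least $2$ and the component is deleted, preserved, or split in part $(ii)$) are exactly the routine verifications the paper leaves implicit, and you carry them out correctly. The only presentational difference is that in part $(i)$ you exploit $\Delta k=\Delta|V|-\Delta m$ for forests to obtain the single closed form $\Delta(m+4k-3l)=d(u)+d(v)-7+3\varepsilon_u+3\varepsilon_v$, which subsumes all six rows of Table~\ref{table:toucher} at once.
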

	
	\begin{proof} The calculation steps are shown in the tables. The profit $p_T(G,uv)\geq0$ since the term $+3$ in the definition of $p_T(G,uv)$ comes from $(-1)$ times the minimum value of $\Delta(m+4k-3l)$ in Table~\ref{table:toucher}. The profit $p_I(G,P)\geq0$ since the term $+(r-1)$ in the definition of $p_I(G,uv)$ comes from $(-1)$ times the minimum value of $\Delta(m+4k-3l)$ in Table~\ref{table:isolator}.	
	\end{proof}
	
	We are now ready to prove our main lemma which provides a lower bound for $\alpha(m,k,l)$ of the non-leaf Isolator-Toucher game on a forest.
	
	\begin{proof}[Proof of Lemma~\ref{lem:alpha}] We use induction on the number $m$ of edges in a forest. Let $F$ be a forest with $n$ vertices, $m$ edges, $k$ components, $l$ leaves, $a$ small vertices and $b$ big vertices. The base case is when all path components have length at most 2, all branches have length at most $2$, and all twigs have length at most $1$. In this case, we shall show that $\left\lfloor\frac{m+4k-3l+4}{5}\right\rfloor\leq0$, and so there is nothing to prove. Since $\sum_{v\in F}d(v)=2m=2(n-k)$, we have $l+2a+\sum_{d(v)\geq3}d(v)=2l+2a+2b-2k$. Then $l=\sum_{d(v)\geq3}d(v)-2b+2k$ and so $l\geq b+2k$. Since every edge in a non-path component is adjacent to a big vertex and every path component contains at most $2$ edges, it follows that
		\begin{center}
			$m\leq\displaystyle\displaystyle\sum_{d(v)\geq3}d(v)+2k=l+2b\leq 3l-4k$.
		\end{center} 
		Now, we suppose that there is either a path component of length at least $3$, a branch of length at least $3$, or a twig of length at least $2$.
		
		Isolator's strategy is to keep claiming consecutive edges, for as long as he can, to form an Isolator path. Therefore, he only plays within a path component, a branch, or a twig, say $P$. We label the edges of $P$ by $e_1, e_2,\dots, e_s$ respectively starting from a big edge (if exists). Note that we shall use this convention to label any path component, branch, or twig in this proof.  Assuming he has claimed the edges $e_t, e_{t+1},\dots,  e_{t+r}$, he then claims $e_{t-1}$ or $e_{t+r+1}$ if it is available, otherwise he stops. That is, he stops if ($t=1$ or $e_{t-1}$ is a Toucher edge) and ($t+r=s$ or $e_{t+r+1}$ is a Toucher edge). 
		
		Suppose Isolator stops with edges $e_t, e_{t+1},\dots,  e_{t+r}$. Then these edges form a path $Q$. So far, both players have claimed $r+1$ edges each since Isolator plays first, and the score is $r$ since Isolator creates an untouched vertex in every move except the first one. We  note that the case where Toucher has claimed only $r$ edges because all edges had been claimed, can be proved similarly. Let $G$ be the partially played graph at this step. If $f_1,\dots,f_{r+1}$ be the Toucher edges in $G$, then let $G_1=G\circleddash f_1\circleddash\dots\circleddash f_{r+1}$ be a forest with $m_1$ edges, $k_1$ components and $l_1$ leaves, let $G_2=G_1\circleddash Q$ be a forest with $m_2$ edges, $k_2$ components and $l_2$ leaves, and let $G_3=G_2-U$ be a forest with $m_3$ edges, $k_3$ components and $l_3$ leaves, where $U$ is the set of vertices of path components of length $1$ in $G_2$. 
		
		By Proposition~\ref{prop:equi}, the game on $G$ is equivalent to the game on $G_1$ which is equivalent to the game on $G_2$ with an extra score of $r$, and the score of the game on $G_2$ when both players play optimally is equal to that on $G_3$. Therefore, it follows that 
		\begingroup
        \allowdisplaybreaks
		\begin{align*}
			\alpha(m,k,l)&\geq r+\alpha(m_3,k_3,l_3)  \\	
			&\geq r+\left\lfloor\dfrac{m_3+4k_3-3l_3+4}{5}\right\rfloor   \hskip 5mm \text{(by the induction hypothesis)} \\	
			&= r+\left\lfloor\dfrac{m+4k-3l+4}{5}+\dfrac{\Delta_1(m+4k-3l)}{5}+\dfrac{\Delta_2(m+4k-3l)}{5} +\dfrac{\Delta_3(m+4k-3l)}{5} \right \rfloor \\ 
			&= r+\left\lfloor\dfrac{m+4k-3l+4}{5}+\dfrac{\sum_{i=0}^{r} (-3+p_T(G\circleddash f_1\circleddash\dots\circleddash f_{i},f_{i+1}))}{5}\right. \\
			&\hskip 38mm \left.+\dfrac{-r+1+p_I(G_1,Q)}{5}+\dfrac{p_{L}(G_2,U)}{5}\right\rfloor \\
			&\hskip 38mm \hskip 5mm \text{(by Proposition~\ref{prop:profit} since }Q\text{ is an Isolator path in }G_1)   \\
			&= r+\left\lfloor\dfrac{m+4k-3l+4}{5}+\dfrac{-3(r+1)+p_T}{5}+\dfrac{-r+1+p_I}{5}+\dfrac{p_{L}}{5}\right\rfloor  \\	
			&=\left\lfloor\dfrac{m+4k-3l+4}{5}+\dfrac{r+p_T+p_I+p_{L}-2}{5}\right\rfloor,
		\end{align*}
		where 	\begin{align*}
			\Delta_1(m+4k-3l)&=(m_1+4k_1-3l_1)-(m+4k-3l),  \\
			\Delta_2(m+4k-3l)&=(m_2+4k_2-3l_2)-(m_1+4k_1-3l_1),  \\
			\Delta_3(m+4k-3l)&=(m_3+4k_3-3l_3)-(m_2+4k_2-3l_2), \\ 
			p_T=p_T(G\circleddash f_1\circleddash\dots\circleddash &f_{i},f_{i+1}),\, p_I=p_I(G_1,Q)\text{ and }\, p_L=p_L(G_2,U).
		\end{align*}
		\endgroup
		
		Therefore, it suffices to show that $r+p_T+p_I+p_{L}\geq2$. Since every term in the sum $r+\sum p_T(G\circleddash f_1\circleddash\dots\circleddash f_{i},f_{i+1})+p_I+p_{L}$ is non-negative by Proposition~\ref{prop:profit}, we shall find a subset whose sum is at least 2. Recall that there is either a path component of length at least $3$, a branch of length at least $3$, or a twig of length at least $2$. The proof is divided into five cases.
		
		\textbf{Case 1.} There is a path component of length $3$. 
		
		Isolator claims the edge $e_2$ in his first move. If Toucher claims the leaf edge $e_1$ or $e_3$ in some move, then $p_T\geq2$ by Proposition~\ref{prop:profit}. Otherwise, Isolator claims the edges $e_1$ and $e_3$, hence $r=2$.

		\textbf{Case 2.} There is a path component of length at least $4$.	
		
		Isolator claims the edge $e_3$ in his first move. If Toucher claims the leaf edge $e_1$ in some move, then $p_T\geq2$ by Proposition~\ref{prop:profit}.  If Toucher claims the edge $e_2$ in some move (but not $e_1$), then $G_2$ has a path component $e_1$ of length $1$ and so $p_{L}\geq1$ by Proposition~\ref{prop:profit}. Clearly, $r\geq1$, hence it follows that $r+p_{L}\geq2$. Otherwise, Isolator claims the edges $e_1$ and $e_2$, hence $r\geq2$.
		
		\textbf{Case 3.} There is a branch of length at least $3$.	
		
		Isolator claims the edge $e_2$ in his first move. If Toucher claims the big edge $e_1$ in some move, then  $p_T\geq1$ by Proposition~\ref{prop:profit}. Clearly, $r\geq1$, hence it follows that $r+p_T\geq2$. If  Toucher claims the edge $e_3$ in some move, then $p_I\geq1$ by Proposition~\ref{prop:profit} since Isolator claims the big edge $e_1$. Clearly, $r=1$, hence it follows that $r+p_I\geq2$. Otherwise, Isolator claims the edges $e_1$ and $e_3$, hence $r\geq2$.

		\textbf{Case 4.} There is a twig of length $2$.
		
		Isolator claims the edge $e_1$ in his first move. If Toucher claims the leaf edge $e_2$ in some move, then $p_T\geq2$ by Proposition~\ref{prop:profit}. Otherwise, Isolator claims the edge $e_2$, hence $p_I\geq1$ by Proposition~\ref{prop:profit} since Isolator claims the big edge $e_1$. Clearly, $r=1$, hence it follows that $r+p_I\geq2$.

		\textbf{Case 5.} There is a twig of length at least $3$.
		
		Isolator claims the edge $e_2$ in his first move. If Toucher claims the big edge $e_1$ in some move, then $p_T\geq 1$ by Proposition~\ref{prop:profit}. Clearly, $r\geq1$, hence it follows that $r+p_T\geq2$. If Toucher claims the edge $e_3$ in some move, then $p_I\geq1$ by Proposition~\ref{prop:profit} since Isolator claims the big edge $e_1$.  Clearly, $r=1$,  it follows that $r+p_I\geq2$. Otherwise, Isolator claims the edges $e_1$ and $e_3$, hence $r\geq2$.
		
		This completes the proof of Lemma~\ref{lem:alpha}. 
	\end{proof}

	We now prove Theorem~\ref{thm:uT} which improves the lower bound for $u(T)$ of the Toucher-Isolator game, by applying the result on the non-leaf Isolator-Toucher game in Lemma~\ref{lem:alpha}.

	\begin{proof}[Proof of Theorem~\ref{thm:uT}] Let $T$ be a tree with $m\geq2$ edges, $k$ components and $l$ leaves. We shall show that 
		\begin{center}
			$u(T)\geq\left\lfloor\dfrac{m+4}{5}\right\rfloor$.
		\end{center}
		
		For a partially played graph $G$, a \emph{meta-leaf} in $G$ is a leaf in the graph obtained from $G$ by deleting all Isolator edges, and a \emph{meta-leaf edge} in $G$ is an edge incident to a meta-leaf in $G$. 
		
		Isolator's strategy is to keep claiming an edge which produces a new untouched vertex in every move i.e., he claims a meta-leaf edge in the current partially played graph if it is available, otherwise he stops (see Figure~\ref{figure:thm}: left). That is, he stops when all meta-leaf edges are Toucher edges. We note that he always obtains a score of one in every move because if he claims the edge $uv$ where $u$ is a meta-leaf, then all already played edges incident to $u$ are Isolator edge, and so $u$ becomes untouched. If the process stops after Isolator's move, i.e. all edges have been claimed by both players, then Isolator obtains a score of $\left\lfloor\frac{m}{2}\right\rfloor\geq\left\lfloor\frac{m+4}{5}\right\rfloor$, as required. Therefore, we may assume that the process stops after Toucher's move, and in particular, $m\geq3$.
			
		Suppose Isolator stops after $r$ moves. Let $G$ be the partially played graph at this step. Then $G$ has $r+1$ Toucher edges and $r$ Isolator edges since Toucher plays first. Let $H$ be the Isolator subrgaph of $G$ formed by all Isolator edges, and let $G_1=G\circleddash H$ be a forest with $m_1$ edges, $k_1$ components and $l_1$ leaves (see Figure~\ref{figure:thm}: middle). Since Isolator claimed only meta-leaf edges and all meta-leaf edges in $G$ are Toucher edges, $G_1$ is a tree all of whose leaves are touched. By $m\geq3$, each leaf of $G_1$ is incident to a distinct Toucher edge, and so $r+1\geq l_1$. Let $f_1,\dots,f_{r+1}$ be the Toucher edges in $G$, and let $G_2=G_1\circleddash f_1\circleddash\dots\circleddash f_{r+1}$ be the forest with $m_2$ edges, $k_2$ components and $l_2$ leaves (see Figure~\ref{figure:thm}: right). 
		
		By Proposition~\ref{prop:equi} and the fact that the leaves in $G_1$ are touched, the Toucher-Isolator game on $G$ where Isolator plays first is equivalent to the non-leaf Isolator-Toucher game on $G_1$ which is equivalent to the non-leaf Isolator-Toucher game on $G_2$ with an extra score of $r$. Therefore, it follows that
        \allowdisplaybreaks
		\begin{align*}
			u(T)&\geq r+\alpha(m_2,k_2,l_2)\\
			&\geq r+\left\lfloor\dfrac{m+4k-3l+4}{5}+\dfrac{\Delta_1(m+4k-3l)}{5}+\dfrac{\Delta_2(m+4k-3l)}{5}\right\rfloor  \hskip 5mm \text{(by Lemma }\ref{lem:alpha})\\
			&= r+\left\lfloor\dfrac{m+4k-3l+4}{5}+\dfrac{(m_1-m)+4(k_1-k)-3(l_1-l)}{5}\right. \\
			&\phantom{= r. } \left.+\dfrac{\sum_{i=0}^{r} (-3+p_T(G_1\circleddash f_1\circleddash\dots\circleddash f_{i},f_{i+1}))}{5}\right\rfloor   \\
			&\geq r+\left\lfloor\dfrac{m+4k-3l+4}{5}+\dfrac{(-r)+4(0)-3(l_1-l)}{5}\right. \\
			&\phantom{= r. } \left.+\dfrac{-3(r+1)+2l_1}{5}\right\rfloor \hskip 5mm   \text{(by Proposition~\ref{prop:profit} since }G_1\text{ has }l_1\text{ leaf edges}) \\
			&=\left\lfloor\dfrac{m+4k+r-l_1+1}{5}\right\rfloor \\
			&\geq\left\lfloor\dfrac{m+4k}{5}\right\rfloor  \hskip 5mm  \text{($r-l_1-1\geq0$)} \\
			&=\left\lfloor\dfrac{m+4}{5}\right\rfloor, \hskip 5mm  \text{($k=1$)}
		\end{align*}	
		where 	\begin{align*}	\Delta_1(m+4k-3l)&=(m_1+4k_1-3l_1)-(m+4k-3l),  \\
		\Delta_2(m+4k-3l)&=(m_2+4k_2-3l_2)-(m_1+4k_1-3l_1).  \qedhere 
		\end{align*} 
	
	\end{proof}

	\section{Concluding Remarks}\label{section-conclusion}
	
	As a result of Theorem~\ref{thm:uT}, for any tree $T$ with $n\geq3$ vertices,
	\begin{center}
		$u(P_n)\leq u(T)\leq u(S_n)$,
	\end{center}
	where $S_n$ is a star with $n$ vertices. Moreover, Theorem~\ref{thm:uT} implies that, for a forest with $k$ trees, $u(F)\geq\sum_{i=1}^{k}\left\lfloor\frac{n_i+3}{5}\right\rfloor$, where $n_i$ is the number of vertices of the $i^{\text{th}}$ tree in $F$ because, in each move, Isolator can play optimally on the tree Toucher just played. However, the lower bound of $\left\lfloor\frac{n+3k}{5}\right\rfloor$ is not possible because for example, $u(kP_3)=k$  where $kP_3$ is the disjoint union of $k$ copies of $P_3$. Many interesting questions about the Toucher-Isolator game are still open (see~\cite{MR4025410}). 
	For example, find a $3$-regular graph $G$ with $n$ vertices that maximizes $u(G)$. Dowden, Kang, Mikala\v{c}ki and Stojakovi\'{c} showed that the largest proportion of untouched vertices for a $3$-regular graph is between $\frac{1}{24}$ and $\frac{1}{8}$.
	
\section*{Acknowledgment}	

The first author is grateful for financial support from the Science Achievement Scholarship of Thailand. 

	\bibliographystyle{siam} 
	\bibliography{TIgamebib}
	
\end{document}